\newcommand{\Href}[2]{\hyperref[#2]{#1~\ref{#2}}}
\def\R{{\mathbb R}}
\providecommand{\parenth}[1]{\left(#1\right)}%
\numberwithin{equation}{section}
\newtheorem{thm}{Theorem}
\title{Erd\H{o}s--Ko--Rado and Hilton--Milner theorems for two-forms}
\author{Grigory Ivanov
\email{grimivanov@gmail.com}
\and
Seyda K\"{o}se
\email{seyda.kOese@ist.ac.at}
\address{ 
Institute of Science and Technology Austria (IST Austria), 
Klosterneuburg, 3400, Austria}}
\subjclass[2020]{Primary: 05D05, Secondary: 15A75, 	14N20}
\keywords{Erd\H{o}s--Ko--Rado theorem, Hilton--Milner theorem, intersecting family}
\begin{document}
\begin{abstract}
In this short note we show that both  generalizations of celebrated Erd\H{o}s--Ko--Rado theorem and Hilton--Milner theorem to the setting of exterior algebra in the simplest non-trivial case of two-forms follow from the folklore puzzle about possible arrangements of an intersecting family of lines.
\end{abstract}
\maketitle

\section{Introduction}

We recall that a family of sets is called \emph{intersecting} if any two sets of the family  have a non-empty intersection. We assume that $n$ is strictly greater than $4$ throughout the note. 
The celebrated Erd\H{o}s--Ko--Rado theorem  \cite{erdos1961intersection} states

\medskip \noindent
\emph{Suppose that $r \leq n/2$. If $\mathcal{F}$ is
an intersecting family of $r$-element subsets of $\{1, \dots, n\}$,
then $\left|\mathcal{F}\right| \leq {n-1 \choose r-1}$. If more strongly
$r < n/2$, then the equality $\left|\mathcal{F}\right|={n-1 \choose r-1}$
holds only if all the sets in $\mathcal{F}$ share a common element.}

\medskip
\noindent

Hilton and Milner \cite{hilton1967some} obtained the following stability extension of this result: 

\medskip \noindent
\emph{Suppose that $r < n/2$. If $\mathcal{F}$ is
an intersecting family of $r$-element subsets of $\{1, \dots, n\}$ such that there is no common element for all sets of $\mathcal{F}$,
then $\left|\mathcal{F}\right| \leq {n-1 \choose r-1} - {n - r - 1 \choose r-1} + 1$.}

\medskip
\noindent
Except for the case $r = 3,$ there is a unique up to renaming extremal intersecting family 
in the Hilton--Milner result (see \cite{frankl1986non}).

Recently, the Erd\H{o}s--Ko--Rado theorem was extended to the exterior algebra 
(see \cite[Theorem 2.3]{scott2021combinatorics} and \cite[Theorem 1.4]{woodroofe2020algebraic}), where instead of intersecting families of sets, the authors used special subspaces of the space of $r$-forms over $\R^n$ or $\mathbb{C}^n$.
We will say that a subspace $W$ of $\Lambda^{r}\! \parenth{\R^n}$ is \emph{self-annihilating} if $w_1 \wedge w_2 = 0$ for any two $r$-forms $w_1$ and $w_2$ of $W.$
The result may be formulated as follows.

\medskip \noindent
\emph{Suppose that $r < n/2$.
 If $W$ is
 a self-annihilating subspace of $\Lambda^{r}\! \parenth{\R^n},$ 
then $\dim W \leq {n-1 \choose r-1}$.}

\medskip
\noindent
This result implies the inequality of the Erd\H{o}s--Ko--Rado theorem (consider the linear hull of forms $e_{i_1} \wedge \dots \wedge e_{i_r}$ for $\{i_1, \dots i_r\} \in \mathcal{F}$). However, neither the characterization of extremal configurations, nor an extension of the Hilton--Milner theorem were obtained in the setting of exterior algebra. The reasonable conjecture is that $\dim W = {n-1 \choose r-1}$ if and only if all $r$-forms of $W$ are of the form $a \wedge v$ for some fixed $a \in \R^n.$ 

In this note we show that in the simplest non-trivial case of $r=2$ both the characterization of extremal configurations in the Erd\H{o}s--Ko--Rado theorem and the extension of the Hilton--Milner theorem follows from the folklore fact, which we state without proof:

\medskip
\noindent \textbf{Folklore lemma.} 
Let $L$ be a set of lines in  $\mathbb{R}P^{n-1}$  such that any two of them intersect.
Then either all lines pass through one point, or all lines lie in a two-dimensional subspace.

\section{Statement and its proof}
Our small contribution is the following theorem.
\begin{thm}
Suppose $n \geq 5,$ and  $W$ is
 a self-annihilating subspace of $\Lambda^{2}\! \parenth{\R^n}.$
Then
\begin{enumerate}
\item\label{EKR_two-forms_EKR} $\dim W \leq {n-1},$ and the equality holds if and only if  
all $2$-forms of $W$ are of the form $a \wedge v$ for some fixed $a \in \R^n.$ 
\item\label{EKR_two-forms_Hilton-Milner}  if there is no $a \in \R^n$ such that any  $2$-form of $W$ is of the form $a \wedge v,$ then $\dim W \leq 3.$ More strongly, $\dim W = 3$ in this case if and only if 
$W$ is the linear hull of forms $x_1 \wedge x_2, x_2 \wedge x_3, x_3 \wedge x_1$ for some linearly independent $x_1, x_2, x_3  \in \R^n.$
\end{enumerate} 
\end{thm}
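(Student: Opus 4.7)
The plan is to translate the self-annihilation condition into a statement about projective lines and then apply the folklore lemma.

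First, I would observe that $\omega\wedge\omega = 0$ for every $\omega\in W$, which is classically equivalent (for $2$-forms) to $\omega$ being decomposable, i.e.\ $\omega = u\wedge v$. To each non-zero $\omega\in W$ one associates the projective line $\ell_\omega\subset \R P^{n-1}$, namely the projectivization of $\mathrm{span}(u,v)$; this does not depend on the chosen decomposition. For two non-zero decomposable $2$-forms one has $\omega_1\wedge\omega_2 = 0$ if and only if the underlying $2$-planes share a non-zero vector, i.e.\ if and only if $\ell_{\omega_1}\cap\ell_{\omega_2}\neq\emptyset$. Therefore $\mathcal{L} = \{\ell_\omega : \omega\in W\setminus\{0\}\}$ is a family of pairwise intersecting projective lines in $\R P^{n-1}$, and the folklore lemma gives the dichotomy: either (a) all $\ell_\omega$ pass through a common point $[a]\in\R P^{n-1}$, or (b) all $\ell_\omega$ lie in the projectivization of some $3$-dimensional subspace $V\subset \R^n$.

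In case (a), every $\omega\in W$ is of the form $a\wedge v$, so $W\subseteq a\wedge\R^n$; since the map $v\mapsto a\wedge v$ has kernel $\R a$ and image of dimension $n-1$, we conclude $\dim W \leq n-1$, with equality if and only if $W = a\wedge\R^n$. This yields part \eqref{EKR_two-forms_EKR}: the extremal dimension $n-1\geq 4$ is incompatible with case (b), so at the extremum we must be in case (a), and the characterization follows at once. In case (b), $W\subseteq \Lambda^2(V)$ whose dimension equals $3$, so $\dim W\leq 3$. Under the hypothesis of part \eqref{EKR_two-forms_Hilton-Milner}, case (a) is excluded, we are in case (b), and equality $\dim W=3$ forces $W = \Lambda^2(V) = \langle x_1\wedge x_2,\, x_2\wedge x_3,\, x_3\wedge x_1\rangle$ for any basis $x_1,x_2,x_3$ of $V$. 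Conversely, such a $W$ is self-annihilating because $\Lambda^4(V)=0$, and admits no common $a$, since otherwise $W\subseteq a\wedge V$ would force $\dim W\leq 2$.

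The only point requiring some care is that the two branches of the folklore dichotomy are not a priori mutually exclusive; however, if both (a) and (b) hold simultaneously then $a\in V$ and $W\subseteq a\wedge V$, so $\dim W\leq 2$, strictly below both extremal dimensions $n-1\geq 4$ and $3$. This ensures that the two extremal characterizations separate cleanly and completes the plan.
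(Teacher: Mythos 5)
Your proposal is correct and follows essentially the same route as the paper: decomposability of each form from $\omega\wedge\omega=0$, translation into a pairwise intersecting family of lines in $\R P^{n-1}$, and the folklore dichotomy giving the bounds $n-1$ and $3$ in the two cases. You merely spell out the extremal characterizations and the (harmless) overlap of the two cases more explicitly than the paper does, which is fine.
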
 
\begin{proof}
Any two-form can be written in the standard form  
$e_1 \wedge e_2 + \dots + e_{2k+1} \wedge e_{2k+2}$ in some basis \cite[Theorem 1.1]{da2008lectures}.
Thus, if a two-form $w$ satisfies $w \wedge w =0$, then it's decomposable, that is,
$w = v_1 \wedge v_2$ for some $v_1, v_2 \in \R^n.$
Consequently, all elements of $W$ are decomposable. 
That is, they correspond to two-dimensional subspaces of $\R^n$ or, equivalently, to lines in $\R P^{n-1}.$ By Folklore lemma, there are two cases:
\begin{enumerate}
\item All the lines pass through one point. Then there are at most $n-1$ linearly independent of them, which easily yields \eqref{EKR_two-forms_EKR}.
\item All the lines belong to some two-dimensional subspace. Then there are at most 3 of them that can be linearly independent, and we have \eqref{EKR_two-forms_Hilton-Milner}.
\end{enumerate}
\end{proof}

\bibliographystyle{alpha}
\bibliography{../../uvolit}
\end{document}